\theoremstyle{plain}
\newtheorem{thm}{Theorem}
\newtheorem{lemma}[thm]{Lemma}
\newtheorem{prop}[thm]{Proposition}
\theoremstyle{definition}
\newtheorem{defn}{Definition}
\newtheorem{exmp}{Example}
\newcommand{\squarething}[1]{
  \noindent
  \begin{center}
  \framebox{
    \vbox{
      \vspace{4mm}
      \hbox to 5.78in { {\Large \hfill #1  \hfill} }
      \vspace{4mm}
    }
  }
  \end{center}
  \vspace*{4mm}
}
\newcommand{\CA}{\mathcal{A}}
\DeclareMathOperator{\vertex}{\mathbf{Vert}}
\DeclareMathOperator{\prevertex}{\mathbf{Prevert}}
\DeclareMathOperator{\en}{End}
\DeclareMathOperator{\res}{Res}
\DeclareMathOperator{\Id}{Id}
\DeclareMathOperator{\Int}{Int}
\newcommand{\E}{E}
\newcommand{\ov}[1]{\overline#1}
\newcommand{\D}{\Delta}
\newcommand{\Z}{\mathbb{Z}}
\newcommand{\g}{\mathfrak{g}}
\newcommand{\CC}{\mathcal{C}}
\newcommand{\CF}{\mathcal{F}}
\newcommand{\CD}{\mathcal{D}}
\newcommand{\CU}{\mathcal{U}}
\DeclareMathOperator{\CQ}{\mathbf{Dist}}
\newcommand{\Set}{\mathbf{Set}}
\newcommand{\Vect}{\mathbf{Vec}}
\newcommand{\vac}{\left|0\right>}
\begin{document}

\begin{center}
{\LARGE \bf A quantum field comonad} \par \bigskip

\renewcommand*{\thefootnote}{\fnsymbol{footnote}}
{\normalsize
	Jethro van Ekeren\footnote{\texttt{jethro@impa.br}}
}

\par \bigskip
%
%
%
%
{\footnotesize Instituto de Matem\'{a}tica Pura e Aplicada (IMPA)} \\ \footnotesize{Rio de Janeiro, Brazil}

\par \bigskip
\end{center}

\vspace*{10mm}

\noindent
\textbf{Abstract.} We encapsulate the basic notions of the theory of vertex algebras into the construction of a comonad on an appropriate category of formal distributions. Vertex algebras are recovered as coalgebras over this comonad.

\vspace*{10mm}

\section{Introduction}

The monad is a basic categorical notion, especially popular in the universal algebra and theoretical computer science communities. By definition a monad consists of a functor $M : \CC \rightarrow \CC$ from a category $\CC$ to itself, together with natural morphisms $X \rightarrow M(X)$ and $M^2(X) \rightarrow M(X)$ for all $X \in \CC$, satisfying certain identities. Monads are ubiquitous since if $L$ and $R$ form a pair of adjoint functors ($L$ left adjoint and $R$ right adjoint) then $M = R \circ L$ is an example of a monad. 
There is a notion of ``algebra'' over a monad $M$, namely an object $X \in \CC$ together with a morphism $m : M(X) \rightarrow X$ compatible with the structure of $M$ in a precise sense.

A simple example of a monad is the functor $\Set \rightarrow \Set$ sending a set $S$ to the set of all finite strings of elements of $S$. An algebra over this monad is a monoid (a set endowed with an associative binary product and a unit). Another example, a case of the general construction of monads from adjoint pairs, is $\Vect_k \rightarrow \Vect_k$ the composition of the symmetric algebra functor (from the category of vector spaces over the field $k$ to commutative $k$-algebras) with the obvious forgetful functor (from commutative algebras to vector spaces). An algebra over this monad is a commutative $k$-algebra. The theory of operads, which encompasses many familiar algebraic structures, can itself be put on a monadic footing: in \cite[Section 5.5]{LV} operads are given a definition as algebras over a suitable monad of rooted trees.

There are examples with a quite different flavour too. The ultrafilter monad $\beta : \Set \rightarrow \Set$ sends a set $S$ to the set of all ultrafilters on $S$. A $\beta$-algebra is a compact Hausdorff topological space {\cite[Proposition 5.5]{Manes}}! The functor $\Vect_k \rightarrow \Vect_k$ taking a vector space $V$ to its double dual $(V^*)^*$ is a monad. An algebra over this monad is an object of the opposite category $\Vect_k^{\text{op}}$, which is the same thing as a linearly compact topological vector space over $k$ (this result, expressed in a different terminology, goes back to {\cite[Section II.6]{Lefschetz}}).

The categorical dual notion of comonad also appears naturally. The power set $P(X)$ of a set $X$ may be made into a category by giving a morphism for each inclusion of subsets. A topology $\tau$ on $X$ yields a notion of interior of a subset, and the operation $\Int_\tau$ taking a subset to its interior defines a functor from $P(X)$ to itself which is in fact a comonad. In this example we have $\Int_\tau^2 = \Int_\tau$ (an interior always equals its own interior), so $\Int_\tau$ is an example of an idempotent comonad. For another topological example: the functor sending a pointed topological space to its universal cover defines a comonad on the category of (locally contractible, path connected) pointed topological spaces \cite[Section 5.3]{Perrone}.

The notion of vertex algebra has been introduced in \cite{Bor} in the guise of a vector space equipped with an infinite collection of bilinear products, satisfying identities analogous to the skew-symmetry and Jacobi identity of Lie algebras. The chiral algebra perspective of Beilinson and Drinfeld makes the connection with Lie algebras explicit; a chiral algebra over an algebraic curve $X$ being defined as a Lie algebra within a certain pseudotensor category of $\CD$-modules on the Ran space of $X$ \cite{BD}.

A different approach (which appears to have entered the mathematical literature in \cite{Goddard}, and developed in \cite{Li.96}, \cite{kac.book} and \cite{BK.field}) starts from a formal notion of quantum field on a vector space, subject to a form of mutual locality. Through a series of elegant manipulations, one arrives at Borcherds' notion of vertex algebra.

The purpose of this note is to point out the monadic nature (or more precisely, comonadic nature) of these constructions. Described informally, there is a comonad in which the state-field correspondence serves as the counit, and in which the comultiplication corresponds to an operation
\[
a(w) \mapsto \sum_{n \in \Z} a(w)_{(n)}(-) x^{-n-1}
\]
in which a quantum field $a(w)$ on a vector space $V$ is sent to a quantum field on an appropriate vector space of quantum fields on $V$. Locality is identified as the condition making sensible this notion of ``quantum field on a set of quantum fields''. The construction of such a comonad $\E$ is spelled out in detail below, and it is proved that vertex algebras are $\E$-coalgebras.

Many of the constructions presented here are part of now standard introductory material on vertex algebras, and in particular many results are cited from the textbook \cite{kac.book}. In the book \cite{KRR}, vector spaces were exchanged for topological algebras, and the construction of the comonad given in this note relies on some technical advantages of the latter viewpoint. In particular the enveloping topological associative algebra of a vertex algebra, described in \cite{FBZ}, plays a part in our construction.

\emph{Acknowledgements}: I gratefully acknowledge many useful discussions with R. Heluani on this and related topics, and also comments of N. Nikolov on an early draft. Supported by FAPERJ grants E-26/010.002607/2019 and E- 26/201.445/2021 and CNPq grant 402449/2021-5.

\section{Construction of the comonad}

We begin by recalling the definitions of comonad and coalgebra over a comonad.
\begin{defn}
Let $\CC$ be a category. A comonad on $\CC$ is a functor $\E : \CC \rightarrow \CC$ together with natural transformations $\varepsilon : \E \Rightarrow \Id$ (called counit) and $\D : \E \Rightarrow \E^2$ (called comultiplication) such that the following diagrams commute for all $X \in \CC$:
\[
\xymatrix{
\E(X) & \E^2(X) \ar@{->}[l]_{\varepsilon_{\E(X)}} & & \E^3(X) & \E^2(X) \ar@{->}[l]_{\Delta_{\E(X)}} \\
\E^2(X) \ar@{->}[u]^{\E(\varepsilon_X)} & \E(X) \ar@{->}[ul]_{\Id_{\E(X)}} \ar@{->}[l]^{\Delta_X} \ar@{->}[u]_{\Delta_X} & & \E^2(X) \ar@{->}[u]^{\E(\Delta_X)} & \E(X) \ar@{->}[l]^{\Delta_X} \ar@{->}[u]_{\Delta_X} \\
}
\]
These are referred to as the counit and coassociativity axioms, respectively.
\end{defn}

\begin{defn}
A coalgebra over the comonad $\E$ is an object $X \in \CC$ together with a morphism $\theta : X \rightarrow \E(X)$ such that the following diagrams commute:
\[
\xymatrix{
X & \E(X) \ar@{->}[l]_{\varepsilon_{X}} & & \E^2(X) & \E(X) \ar@{->}[l]_{\Delta_{X}} \\
 & X \ar@{->}[ul]^{\Id_{X}} \ar@{->}[u]_{\theta} & & \E(X) \ar@{->}[u]^{\E(\theta)} & X \ar@{->}[l]^{\theta} \ar@{->}[u]_{\theta} \\
}
\]
\end{defn}

The basic objects in our construction are certain formal series, which we follow \cite{KRR} in referring to as continuous distributions, with coefficients in a topological associative algebra. To dispell possible ambiguities, we record the definition of topological algebra in the sense that we shall use it. 
\begin{defn}\label{def:lenient}
A topological algebra over a field $k$ consists of a unital associative algebra $U$ equipped with a set of left ideals
\[
U_0 \supset U_1 \supset U_2 \supset \ldots,
\]
such that:
\begin{enumerate}
\item $\cap_p U_p = 0$,

\item for each $u \in U$ and $p \in \Z_+$, there exists $N \in \Z_+$ such that $U_N u \subset U_p$.

\item $U$ is complete, i.e., for any sequence $u_p \in U / U_p$ such that $u_{p+1} \equiv u_p \bmod{U_p}$ for all $p \in \Z_+$, there exists $u \in U$ such that $u_p \equiv u \bmod{U_p}$ for all $p \in \Z_+$.
\end{enumerate}
A morphism $\phi : U^1 \rightarrow U^2$ of topological algebras is a homomorphism of unital associative algebras such that, for each $p_2 \in \Z_+$ there exists $p_1 \in \Z_+$ such that $\phi(U^1_{p_1}) \subset U^2_{p_2}$.
\end{defn}
We remark that a choice of topology on the base field $k$ does not enter into this definition, so a better name might be ``linearly topologised associative algebra''.
\begin{defn}
Let $U$ be a topological algebra. A $U$-module $M$ is said to be smooth if, for all $x \in M$ there exists $p \in \Z_+$ such that $U_p x = 0$.
\end{defn}

\begin{exmp}\label{ex:example1}
Let $\g = \bigoplus_{n \in \Z} \g_n$ be a $\Z$-graded Lie algebra, consider the limit
\[
U = \lim U(\g) / U(\g)\g_{\geq p},
\]
under the system of morphisms $U(\g) / U(\g)\g_{\geq p+1} \rightarrow U(\g) / U(\g)\g_{\geq p}$. Set $U_p \subset U$ to be the kernel of the canonical morphism $U \rightarrow U(\g) / U(\g)\g_{\geq p}$. Intuitively $U$ consists of infinite sums $\sum_i u_i$ where $u_i \in U(\g)$ for each $i$, and for each $p$ all but finitely summands lie in $U_p$. The second condition of Definition \ref{def:lenient} is satisfied since $\g$ is graded, and this condition in turn ensures that the product in $U$ is well defined.
\end{exmp}

\begin{defn}
Let $U$ be a topological algebra. A continuous distribution on $U$ is a formal series
\[
a(w) = \sum_{n \in \Z} a_{(n)} w^{-n-1}, \qquad a_{(n)} \in U
\]
such that, for each $p \in \Z_+$, there exists $N \in \Z_+$ such that $a_{(n)} \in U_p$ for all $n \geq N$.
\end{defn}
The term-by-term product $a(w)b(w)$ of two continuous distributions is not in general well defined. Instead we have the normally ordered product, originating in quantum field theory:
\begin{align*}
:a(w) b(w): = a(w)_+ b(w) + b(w) a(w)_-,
\end{align*}
where
\begin{align*}
a(w)_+ = \sum_{n < 0} a_{(n)} w^{-n-1}, \qquad a(w)_- = \sum_{n \geq 0} a_{(n)} w^{-n-1},
\end{align*}
and its generalisation the ``$n^{\text{th}}$ product'' operations, given in the following definition. (The normally ordered product is recovered as $:a(w)b(w): = a(w)_{(-1)}b(w)$.)
\begin{defn}
Let $a(w)$ and $b(w)$ be continuous distributions on $U$, and let $n \in \Z$. The $n^{\text{th}}$ product of $a(w)$ and $b(w)$ is another continuous distribution on $U$, defined by the formula
\begin{align}\label{eq:nth.prod.def}
a(w)_{(n)}b(w) = \res_z \left( a(z) b(w) i_{z,w}(z-w)^n - b(w) a(z) i_{w, z}(z-w)^n \right).
\end{align}
Here $\res_z$ denotes the formal residue, i.e., extraction of the coefficient of $z^{-1}$. The expressions $i_{z, w}$ and $i_{w, z}$ denote the two distinct binomial series expansions
\begin{align*}
i_{z,w}(z-w)^n &= z^n \sum_{j \in \Z_+} \binom{n}{j} z^{-j} w^j \in k((z))((w)) \\
\text{and} \quad i_{w,z}(z-w)^n &= (-w)^n \sum_{j \in \Z_+} \binom{n}{j} w^{-j} z^j \in k((w))((z)).
\end{align*}
\end{defn}
We observe that the definition of $a(w)_{(n)}b(w)$ involves certain infinite sums of elements of $U$, which converge because of the requirement of completeness in Definition \ref{def:lenient}. The statement that one obtains a well defined continuous distribution is Proposition 14.2 of \cite{KRR}.

\begin{defn}\label{def:locality}
We say that $a(w)$ and $b(w)$ are mutually local if $(z-w)^N [a(z), b(w)] = 0$ for some sufficiently large integer $N$. In particular $a(w)_{(n)}b(w) = 0$ for all $n \geq N$ for such a local pair.
\end{defn}
This is a convenient point to state the definition of vertex algebra.
\begin{defn}\label{def:va}
A vertex algebra consists of a vector space $V$, a vector $\vac \in V$, and a collection of bilinear products $V \otimes V \rightarrow V$, denoted $a \otimes b \mapsto a_{(n)}b$ for $n \in \Z$ such that
\begin{enumerate}
\item for each pair of elements $a, b \in V$ there exists $N \in \Z_+$ such that $a_{(n)}b = 0$ for all $n \geq N$,

\item for each $a \in V$ we have $a_{(n)}\vac = 0$ for all $n \geq 0$,

\item the linear map
\[
V \rightarrow (\en{V})[[w, w^{-1}]], \qquad a \mapsto Y(a, w) = \sum_{n \in \Z} a_{(n)} w^{-n-1}
\]
satisfies
\begin{align*}
Y(\vac, w) &= I_V \\
\text{and} \quad Y(a_{(n)}b, w) &= Y(a, w)_{(n)}Y(b, w),
\end{align*}
for all $a, b \in V$. Here the $n^{\text{th}}$ product on the right hand side is as defined by the formula \eqref{eq:nth.prod.def}. The property in part (1) of this definition guarantees that this $n^{\text{th}}$ product is well defined.
\end{enumerate}
\end{defn}
This definition of vertex algebra is equivalent to various others which appear in the literature. For a discussion of equivalent definitions, see {\cite[Section 1]{DK}}. We now return to the context of continuous distributions. Mutual locality is preserved by $n^{\text{th}}$ products in the following sense.
\begin{lemma}[Dong's lemma]\label{lem:Dong}
Let $a(w)$, $b(w)$ and $c(w)$ be pairwise mutually local continuous distributions on a topological algebra $U$. Then $a(w)_{(n)}b(w)$ and $c(w)$ form a local pair.
\end{lemma}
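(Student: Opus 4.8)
The goal is, for each fixed $n$, to exhibit an integer $M$ with $(x-w)^M [a(x)_{(n)}b(x), c(w)] = 0$; by Definition \ref{def:locality} this is precisely the assertion that $a(w)_{(n)}b(w)$ and $c(w)$ are mutually local. The plan is to reduce everything to the three given pairwise localities. I would begin by fixing a single $N$ large enough that $(z-w)^N$ annihilates each of $[a(z),b(w)]$, $[a(z),c(w)]$ and $[b(z),c(w)]$; such an $N$ exists because $a(w)$, $b(w)$, $c(w)$ are pairwise local.

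Next I would substitute the residue formula \eqref{eq:nth.prod.def} for $a(x)_{(n)}b(x)$ into the commutator with $c(w)$ and expand the inner commutators by the Leibniz rule,
\[
[a(z)b(x),c(w)] = a(z)[b(x),c(w)] + [a(z),c(w)]b(x),
\]
and symmetrically for $[b(x)a(z),c(w)]$. This separates the result into a ``$b$-part'', gathering the terms containing $[b(x),c(w)]$, and an ``$a$-part'', gathering those containing $[a(z),c(w)]$. Since $(x-w)^N$ is a scalar series annihilating $[b(x),c(w)]$, it annihilates the entire $b$-part, disposing of it for any $M \ge N$.

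The $a$-part is the main obstacle: its natural annihilator is $(z-w)^N$, whereas the factor at our disposal is a power of $(x-w)$. To bridge this I would multiply by $(x-w)^{M_0+N}$, keeping one factor $(x-w)^N$ in reserve and expanding the rest through $x-w = (x-z)+(z-w)$ inside $\res_z$. In a resulting monomial $(x-z)^k (z-w)^{M_0-k}$ one of two things happens. If $M_0-k \ge N$, then $(z-w)^{M_0-k}$ kills $[a(z),c(w)]$ and the term vanishes. Otherwise $k$ is large, and absorbing $(x-z)^k$ into the binomial expansions raises the exponent of both $i_{z,x}(z-x)^n$ and $i_{x,z}(z-x)^n$ to $(z-x)^{n+k}$ (up to a common sign); choosing $M_0$ so that $m := n+k \ge N$ throughout this range, the two expansions now coincide, since a nonnegative power is an honest polynomial. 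Consequently the two opposite orderings in \eqref{eq:nth.prod.def} fuse into the single commutator $(z-x)^m[[a(z),c(w)],b(x)]$.

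Finally I would apply the Jacobi identity $[[a(z),c(w)],b(x)] = [[a(z),b(x)],c(w)] + [a(z),[c(w),b(x)]]$. The first summand is annihilated by the factor $(z-x)^m$ already present, because $m \ge N$ and $(z-x)^N[a(z),b(x)]=0$; the second is annihilated by the reserved $(x-w)^N$, because $(x-w)^N[c(w),b(x)]=0$. Hence every monomial vanishes after taking $\res_z$, so $(x-w)^{M_0+N}$ kills the $a$-part, and $M = M_0+N$ (with $M_0$ of order $2N-n$, which is harmless since locality only requires the existence of some $M$) does the job. The technical heart, and the step where all three pairwise localities are genuinely needed, is exactly this last manoeuvre: fusing the two orderings once the exponent is nonnegative, and then splitting by Jacobi so that one summand is absorbed by the large power of $(z-x)$ coming from the index shift and the other by the deliberately reserved power of $(x-w)$.
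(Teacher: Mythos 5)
Your argument is correct and complete: the Leibniz split into an $a$-part (terms with $[a(z),c(w)]$) and a $b$-part (terms with $[b(x),c(w)]$), the expansion of $(x-w)^{M_0}$ through $x-w=(x-z)+(z-w)$ with a factor $(x-w)^N$ held in reserve, the fusion of $i_{z,x}(z-x)^{n+k}$ and $i_{x,z}(z-x)^{n+k}$ once $n+k\geq 0$, and the final Jacobi split $[[a(z),c(w)],b(x)]=[[a(z),b(x)],c(w)]+[a(z),[c(w),b(x)]]$ all check out, with your requirement $M_0\geq 2N-n-1$ exactly what is needed to force $n+k\geq N$ in the surviving range. Note that the paper itself offers no proof of this lemma, citing instead \cite[Lemma 15.1]{KRR} and \cite[Proposition 3.2.7]{Li.96}; your proof is essentially the standard argument from those sources (cf.\ also \cite{kac.book}), differing only in that you organize the orderings of $a(z)$, $b(x)$, $c(w)$ via commutators and the Jacobi identity rather than by a direct case analysis on triple products --- a mild but genuine streamlining that isolates precisely where each of the three pairwise localities is used.
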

(See Lemma 15.1 of \cite{KRR} for this version in the context of formal distributions, Proposition 3.2.7 of \cite{Li.96} for the quantum field version). Let $U$ be a topological algebra and $\CF$ a vector space of mutually local continuous distributions on $U$. The closure $\ov\CF$ of $\CF$ is defined to be the span of all iterated $n^{\text{th}}$ products of elements of $\CF$, including the distribution $\mathbf{1}$ (the unit of $U$) as the product of an empty set of distributions. It may be checked that $a(w)_{(-k-1)}\mathbf{1} = \partial_w^{(k)}a(w)$ for all $k \geq 0$, it follows that $\ov\CF$ is closed under $\partial_w$.

\begin{defn}
Let $V$ be a vector space, and $a(w) = \sum_{n \in \Z} a_{(n)} w^{-n-1}$ a formal series with coefficients $a_{(n)} \in \en(V)$. The series is said to be a quantum field if, for each $b \in V$, there exists $N \in \Z$ such that $a_{(n)}b = 0$ for all $n \geq N$.
\end{defn}

\begin{lemma}\label{lem:trans}
Let $V$ be a vector space, $\vac \in V$ a vector and $T \in \en((V)$ an endomorphism satisfying $T\vac = 0$. Let $a(w)$ be a $T$-invariant quantum field on $V$, i.e., a quantum field satisfying $[T, a(w)] = \partial_w a(w)$. Then $a(w)\vac \in V[[w]]$.
\end{lemma}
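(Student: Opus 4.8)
The plan is to reduce the statement to a purely modewise vanishing claim and then exploit the recursion encoded by $T$-invariance. Writing $a(w) = \sum_{n \in \Z} a_{(n)} w^{-n-1}$, the coefficient of $w^{-n-1}$ in $a(w)\vac$ is precisely $a_{(n)}\vac$, so the assertion $a(w)\vac \in V[[w]]$ is equivalent to the vanishing $a_{(n)}\vac = 0$ for all $n \geq 0$. Thus the entire content is to show that the non-negative modes of $a(w)$ annihilate the vacuum.

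First I would expand the invariance condition $[T, a(w)] = \partial_w a(w)$ into its individual Fourier components. Comparing coefficients of $w^{-n-1}$ on both sides (after reindexing the right-hand side) yields the operator identity
\[
[T, a_{(n)}] = -n\, a_{(n-1)}, \qquad n \in \Z.
\]
Applying this to $\vac$ and using the hypothesis $T\vac = 0$ to kill the term $a_{(n)}T\vac$, I obtain the key recursion
\[
T\bigl(a_{(n)}\vac\bigr) = -n\, a_{(n-1)}\vac,
\]
which expresses $a_{(n-1)}\vac$, up to the scalar $-n$, as the image under $T$ of $a_{(n)}\vac$.

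Next I would invoke the quantum field property of $a(w)$ applied to the single vector $\vac$: there exists $N$, which I may take as large as I like, such that $a_{(n)}\vac = 0$ for all $n \geq N$. I then run a downward induction. If $a_{(n)}\vac = 0$ for some $n \geq 1$, then the recursion forces $-n\, a_{(n-1)}\vac = T(0) = 0$, and since $n \neq 0$ the scalar $-n$ is invertible, whence $a_{(n-1)}\vac = 0$. Beginning from the base case at level $N$ and descending step by step, this establishes $a_{(n)}\vac = 0$ for every $n$ with $0 \leq n \leq N$, and hence for all $n \geq 0$.

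No genuine obstacle arises; the only point deserving attention — and the reason the conclusion is exactly $V[[w]]$ and not something stronger — is that the descent halts precisely at $n = 0$. The step that would propagate vanishing from $a_{(0)}\vac$ to $a_{(-1)}\vac$ carries the factor $-n = 0$ and so yields no information, leaving the negative modes uncontrolled. The argument therefore governs exactly the non-negative modes, which is what is needed. The sole care required is to choose $N$ large enough (replacing $N$ by $\max(N,1)$ if necessary) so that the inductive descent begins strictly above the critical index $n = 0$.
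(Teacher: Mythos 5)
Your proof is correct and is the standard argument: the paper states Lemma \ref{lem:trans} without proof, and the mode identity $[T, a_{(n)}] = -n\,a_{(n-1)}$, giving the recursion $T\bigl(a_{(n)}\vac\bigr) = -n\,a_{(n-1)}\vac$ and a downward induction from the quantum-field cutoff $N$, is exactly the intended reasoning (cf.\ \cite{kac.book}), including your observation that the descent stops at $n=0$. The only tacit hypothesis is that the ground field has characteristic zero, needed for your division by $n$; this is implicitly assumed throughout the paper.
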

Continuing in the context of Lemma \ref{lem:trans}, and denoting by $Q$ the vector space of $T$-invariant quantum fields on $V$, we may introduce $\varepsilon_V : Q \rightarrow V$ the linear map sending $a(w)$ to $a(w)\vac|_{w=0}$. This map is known as the state-field correspondence.
\begin{prop}\label{prop:state-field}
Let $V$ be a vertex algebra, and consider $T : V \rightarrow V$ defined by $T(a) = a_{(-2)}\vac$. Then $Y(a, w)$ is a $T$-invariant quantum field on $V$ for each $a \in V$. Furthermore $\varepsilon_V \circ Y = \Id_V$. In particular
\[
\varepsilon(a(w)_{(n)}b(w)) = \varepsilon(a(w))_{(n)}\varepsilon(b(w))
\]
for $a(w)$, $b(w)$ in the image of $Y$.
\end{prop}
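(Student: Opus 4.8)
The plan is to check the three assertions in turn, with the final displayed identity falling out formally from the first two. Throughout I write $Y(a,w) = \sum_{n} a_{(n)} w^{-n-1}$ and read each $a_{(n)}$ as the endomorphism $b \mapsto a_{(n)} b$ of $V$. That $Y(a,w)$ is a quantum field is immediate from axiom (1) of Definition \ref{def:va}: the coefficients lie in $\en(V)$, and for each fixed $b \in V$ the truncation condition gives $a_{(n)} b = 0$ for $n \gg 0$. From $Y(\vac, w) = I_V$ I read off $\vac_{(n)} = \delta_{n,-1} I_V$, so in particular $T\vac = \vac_{(-2)}\vac = 0$; thus the hypotheses of Lemma \ref{lem:trans} will hold as soon as $T$-invariance of $Y(a,w)$ is established, guaranteeing $Y(a,w)\vac \in V[[w]]$ and hence that $\varepsilon_V$ is defined on the image of $Y$.

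For $T$-invariance the concrete input comes from specialising axiom (3) to $n = -2$ and $b = \vac$:
\[
Y(Ta, w) = Y(a_{(-2)}\vac, w) = Y(a,w)_{(-2)} Y(\vac, w) = Y(a,w)_{(-2)} \mathbf{1} = \partial_w Y(a,w),
\]
the last step being the case $k = 1$ of the identity $c(w)_{(-k-1)}\mathbf{1} = \partial_w^{(k)} c(w)$ noted before Definition \ref{def:va}, with $\mathbf{1} = I_V = Y(\vac,w)$. Comparing coefficients, this says $(Ta)_{(n)} = -n\, a_{(n-1)}$. The definition of $T$-invariance, however, demands the commutator form $[T, Y(a,w)] = \partial_w Y(a,w)$, i.e.\ $[T, a_{(n)}] = (Ta)_{(n)} = -n\, a_{(n-1)}$ for every $n$. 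To obtain this I would use the commutator formula $[a_{(m)}, b_{(n)}] = \sum_{j \geq 0} \binom{m}{j} (a_{(j)}b)_{(m+n-j)}$, which is available here because the fields $Y(a,w)$ are pairwise local: axioms (1) and (3) force $Y(a,w)_{(n)} Y(b,w) = Y(a_{(n)}b, w) = 0$ for $n \gg 0$, and this vanishing is equivalent to mutual locality in the sense of Definition \ref{def:locality}. Deducing $[T, a_{(n)}] = -n\, a_{(n-1)}$ — equivalently that $Ta = a_{(-2)}\vac$ generates infinitesimal translations of every field — is the main technical obstacle, and I would either carry out this computation or cite the corresponding reconstruction result from \cite{kac.book}.

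Next I treat $\varepsilon_V \circ Y = \Id_V$. The vacuum axiom $a_{(n)}\vac = 0$ for $n \geq 0$ gives $Y(a,w)\vac = \sum_{k \geq 0} (a_{(-k-1)}\vac)\, w^{k} \in V[[w]]$, so that $\varepsilon_V(Y(a,w)) = Y(a,w)\vac|_{w=0} = a_{(-1)}\vac$. Thus $\varepsilon_V \circ Y = \Id_V$ is \emph{equivalent} to the creation identity $a_{(-1)}\vac = a$. Specialising axiom (3) to $n = -1$, $b = \vac$ yields $Y(a_{(-1)}\vac, w) = Y(a,w)_{(-1)} \mathbf{1} = Y(a,w)$, which shows that $a \mapsto a_{(-1)}\vac$ is an idempotent fixing the image of the state-field map; creation is precisely the statement that this idempotent is the identity, equivalently that $Y$ is injective. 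This is the one place where the full creation axiom is needed, beyond the annihilation half $a_{(n)}\vac = 0$ ($n \geq 0$) recorded in Definition \ref{def:va}, and I would invoke it as part of the vertex algebra structure.

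Granting the previous two points, the final identity is purely formal. For $a(w), b(w)$ in the image of $Y$ write $a(w) = Y(a,w)$, $b(w) = Y(b,w)$ with $a = \varepsilon_V(a(w))$, $b = \varepsilon_V(b(w))$; then axiom (3) gives $a(w)_{(n)} b(w) = Y(a,w)_{(n)} Y(b,w) = Y(a_{(n)}b, w)$, and applying $\varepsilon_V$,
\[
\varepsilon_V(a(w)_{(n)} b(w)) = \varepsilon_V(Y(a_{(n)}b, w)) = a_{(n)}b = \varepsilon_V(a(w))_{(n)} \varepsilon_V(b(w)),
\]
where the middle equality is $\varepsilon_V \circ Y = \Id_V$ and the right-hand product is the given $n$-th product on $V$. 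The crux of the whole proposition is therefore the commutator form of translation covariance in the second paragraph; the remaining steps are immediate or direct specialisations of axiom (3).
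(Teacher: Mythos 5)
Your decomposition into the three assertions is sound, and your observation about creation is a genuinely correct catch: Definition \ref{def:va} as printed records only the annihilation half $a_{(n)}\vac = 0$ ($n \geq 0$), and the identity $\varepsilon_V \circ Y = \Id_V$ is exactly the creation property $a_{(-1)}\vac = a$, which does not follow from the stated axioms (e.g.\ $V = k\vac \oplus kc$ with $Y(c,w) = 0$ and $Y(\vac,w)=I_V$ satisfies all of (1)--(3) but has $\varepsilon_V(Y(c,w)) = 0$); it must indeed be imported from the equivalent definitions alluded to via \cite{DK}. The paper offers no proof of this proposition, treating it as standard, so your argument stands on its own — and there the crux fails. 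Your plan for the $T$-invariance step rests on the claim that vanishing of $Y(a,w)_{(n)}Y(b,w)$ for $n \gg 0$ is \emph{equivalent} to mutual locality. It is not: locality implies the vanishing (as Definition \ref{def:locality} records), but the converse is false. Concretely, for a noncommutative associative algebra with unit $\vac$ and derivation $T$, the left-multiplication fields $Y(a,w)b = (e^{wT}a)\,b$ satisfy every axiom of Definition \ref{def:va} \emph{and} creation, with $a_{(n)}b = 0$ for all $n \geq 0$, yet $[Y(a,z), Y(b,w)]$ is a nonzero series in nonnegative powers of $z,w$, killed by no power of $(z-w)$, and the commutator formula \eqref{eq:comm.fla.vas} fails outright for these fields (its right side vanishes, its left side does not). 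So the commutator formula is not available from the axioms you are allowed to use. Worse, even where it does hold it cannot compute $[T, a_{(n)}]$: $T$ is defined by $a \mapsto a_{(-2)}\vac$ and is not a mode of any field, so a formula for $[a_{(m)}, b_{(n)}]$ never applies to it. Your self-identified ``main technical obstacle'' is therefore genuinely open in your write-up, and the proposed route to it would fail.

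The repair is elementary and uses no locality. Comparing coefficients of $w^{-m-1}$ in axiom (3) gives
\[
(a_{(n)}b)_{(m)} = \sum_{j \in \Z_+} (-1)^j \binom{n}{j}\left( a_{(n-j)}\, b_{(m+j)} - (-1)^n\, b_{(n+m-j)}\, a_{(j)} \right).
\]
Apply this to $\vac$ with $m = -2$: the second sum dies by annihilation, and in the first only $j = 0, 1$ survive, since $b_{(j-2)}\vac = 0$ for $j \geq 2$, while $b_{(-2)}\vac = Tb$ and $b_{(-1)}\vac = b$ by creation. This yields $T(a_{(n)}b) - a_{(n)}Tb = -n\, a_{(n-1)}b$, i.e.\ $[T, Y(a,w)] = \partial_w Y(a,w)$, exactly the commutator form you needed (your weaker identity $Y(Ta,w) = \partial_w Y(a,w)$ is correct but, as you noted, insufficient by itself). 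With this inserted, the rest of your argument goes through: truncation gives the quantum field property, $T\vac = 0$ follows from $\vac_{(n)} = \delta_{n,-1}I_V$, the second assertion is creation via $Y(a,w)\vac|_{w=0} = a_{(-1)}\vac$, and your final paragraph deducing $\varepsilon(a(w)_{(n)}b(w)) = \varepsilon(a(w))_{(n)}\varepsilon(b(w))$ from the first two assertions is purely formal and correct. Note that the same $m=-2$ computation shows creation is needed for $T$-invariance as well, so that axiom is doing double duty in this proposition.
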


We are now ready to set up our category.
\begin{defn}
Let $\CQ$ denote the category whose objects are pairs $(U, \CF)$ consisting of a topological algebra $U$, and a vector space $\CF$ of mutually local continuous distributions on $U$, and whose morphisms
\[
(U_1, \CF_1) \rightarrow (U_2, \CF_2)
\]
consist of a morphism $f : U_1 \rightarrow U_2$ of topological algebras such that $f(\CF_1) \subset \ov\CF_2$.
\end{defn}

We now describe our proposed comonad $\E : \CQ \rightarrow \CQ$, at first informally, and then rigorously. Let $(U, \CF) \in \CQ$. First we observe that for each $a(w) \in \CF$ and each $n \in \Z$ we have an endomorphism $a(w)_{(n)} \in \en(\ov\CF)$. We might consider the associative algebra $A \subset \en(\ov\CF)$ generated by all such endomorphisms, and we might set, for each $p \in \Z_+$, the subspace $A_p \subset A$ to be the span of all $a(w)_{(n)}$ for $n \geq p$. We claim that the completion $A^c = \lim A / A A_p$ defines a topological algebra. In fact, as we will see below, the following commutator formula holds in $A$
\begin{align}\label{eq:comm.fla.vas}
[a(w)_{(m)}, b(w)_{(n)}] = \sum_{j \geq 0} \binom{m}{j} (a(w)_{(j)}b(w))_{(m+n-j)}
\end{align}
(the sum on the right hand side is finite due to mutual locality). It follows easily that conditions in Definition \ref{def:lenient} are met. Now let $a(w) \in \CF$ once more. We use it to define a distribution $a(w)(x)$ on $A^c$ as follows:
\[
a(w)(x) = \sum_{n \in \Z} a(w)_{(n)} x^{-n-1}.
\]
Then we would like to set $\E(U, \CF) = (A^c, \widehat{\CF})$, where $\widehat\CF = \{a(w)(x) \mid a(w) \in \ov\CF\}$.

The difficulty with this proposal appears when we try to define a counit natural transformation
\[
\varepsilon : A^c \rightarrow U
\]
sending $a(w)_{(n)}$ to $a_{(n)}$. It might happen that $U$ contains central elements, leading to $a(w)_{(n)} = 0$ while $a_{(n)} \neq 0$. To remedy this problem we replace $A^c$ by an \emph{enveloping algebra} construction, described for example in \cite{FBZ}.
\begin{defn}\label{def:mode.lie.algebra}
Let $(U, \CF) \in \CQ$. Its mode Lie algebra is the vector space
\[
\g = \frac{\ov\CF \otimes k[\xi^{\pm 1}]}{\left< a(w) \otimes \xi^n + n \, a(w) \otimes \xi^{n-1} \mid \text{$a(w) \in \ov\CF$ and $n \in \Z$}\right>},
\]
where $\left< \cdots \right>$ means linear span, equipped with the Lie bracket
\[
[a(w)_{[m]}, b(w)_{[n]}] = \sum_{j \in \Z_+} \binom{m}{j} \left( a(w)_{(j)}b(w) \right)_{[m+n-j]}.
\]
Here and below we denote the image of $a(w) \otimes \xi^n$ in the quotient $\g$ by $a(w)_{[n]}$.
\end{defn}
\begin{lemma}\label{lem:Lie.V}
The vector space $\g$ with bracket defined above is a Lie algebra.
\end{lemma}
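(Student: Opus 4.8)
The plan is to verify, for the bracket of Definition \ref{def:mode.lie.algebra}, the three defining properties of a Lie algebra: that it descends to the quotient $\g$, that it is skew-symmetric, and that it satisfies the Jacobi identity (bilinearity being immediate from the formula). The whole argument rests on three structural identities for the $n^{\text{th}}$ products on the closed local family $\ov\CF$: (i) the two sesquilinearity identities $(\partial_w a(w))_{(j)} b(w) = -j\, a(w)_{(j-1)} b(w)$ and $\partial_w(a(w)_{(j)} b(w)) = (\partial_w a(w))_{(j)} b(w) + a(w)_{(j)} (\partial_w b(w))$, both of which follow directly from the defining formula \eqref{eq:nth.prod.def} by integration by parts in the residue; (ii) the skew-symmetry $a(w)_{(n)} b(w) = -\sum_{j \geq 0} (-1)^{n+j} \partial_w^{(j)}(b(w)_{(n+j)} a(w))$; and (iii) the Borcherds (associativity) identity relating the iterated products $a(w)_{(i)}(b(w)_{(j)} c(w))$, $(a(w)_{(p)} b(w))_{(q)} c(w)$ and $b(w)_{(j)}(a(w)_{(i)} c(w))$. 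Identities (ii) and (iii) are foundational facts for any pairwise mutually local family closed under $n^{\text{th}}$ products; the sums occurring in them, and in the bracket itself, are finite by mutual locality together with Dong's Lemma \ref{lem:Dong}, and every field written above lies in $\ov\CF$ since $\ov\CF$ is closed under $n^{\text{th}}$ products and $\partial_w$. I would cite (ii) and (iii) from \cite{kac.book} rather than reprove them. Throughout I read the defining relation as $(\partial_w a(w))_{[n]} = -n\, a(w)_{[n-1]}$, which is the mode form of sesquilinearity.

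For well-definedness I would check that the bilinear map given by the bracket formula on $\ov\CF \otimes k[\xi^{\pm 1}]$ annihilates the defining relation in each slot. In the first slot this is the claim $[(\partial_w a(w))_{[n]}, b(w)_{[p]}] = -n\,[a(w)_{[n-1]}, b(w)_{[p]}]$, which reduces via the first identity in (i) and the Pascal-type relation $(j+1)\binom{n}{j+1} = n\binom{n-1}{j}$ to an equality of binomial coefficients. The second slot is analogous, using the second identity in (i) and then absorbing the resulting $\partial_w(a(w)_{(j)} b(w))$ term into a lower mode through the defining relation itself; the coefficient of $(a(w)_{(j)} b(w))_{[m+n-1-j]}$ collapses to $-n\binom{m}{j}$ after the simplification $m\binom{m-1}{j} = (m-j)\binom{m}{j}$. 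Skew-symmetry of the bracket is then obtained by substituting (ii) into the bracket formula and converting each derivative term into a shifted mode through $(\partial_w^{(i)} c(w))_{[p]} = (-1)^i \binom{p}{i} c(w)_{[p-i]}$, an iterate of the defining relation; the inner sum collapses to $\binom{n}{k}$ by the Vandermonde-type identity $\sum_{j} (-1)^j \binom{m}{j}\binom{m+n-j}{k-j} = \binom{n}{k}$, yielding $-[b(w)_{[n]}, a(w)_{[m]}]$.

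The main obstacle is the Jacobi identity. Expanding both sides of $[a_{[l]}, [b_{[m]}, c_{[n]}]] = [[a_{[l]}, b_{[m]}], c_{[n]}] + [b_{[m]}, [a_{[l]}, c_{[n]}]]$ by the bracket formula produces three double sums whose summands are exactly the three kinds of iterated $n^{\text{th}}$ product appearing in (iii), each weighted by a product of two binomial coefficients and placed in the common mode $[l+m+n-(\text{shift})]$. Matching coefficients mode-by-mode turns the Jacobi identity into precisely the Borcherds identity (iii), after the standard binomial reindexing. I expect the bookkeeping of the binomial weights to be the only delicate point, and it is cleanest to repackage the entire computation in terms of the $\lambda$-bracket $[a(w)_\lambda b(w)] = \sum_{j \geq 0} \tfrac{\lambda^j}{j!}\, a(w)_{(j)} b(w)$: identities (i), (ii), (iii) become the sesquilinearity, skew-symmetry and Jacobi axioms of a Lie conformal algebra on $\ov\CF$, and the lemma reduces to the known fact that the coefficient Lie algebra of a Lie conformal algebra is a Lie algebra.

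I would emphasize that one cannot shortcut the Jacobi verification by pushing it into the associative algebra $A \subset \en(\ov\CF)$. The assignment $a(w)_{[n]} \mapsto a(w)_{(n)}$ is a well-defined bracket-preserving linear map by the commutator formula \eqref{eq:comm.fla.vas}, so the Jacobiator of $\g$ maps to zero; but this map has a nontrivial kernel in general (this is precisely the central-element phenomenon the paper later resolves via the enveloping algebra), so vanishing in $\en(\ov\CF)$ does not give vanishing in $\g$. The Jacobi identity for $\g$ is therefore genuinely a statement about the $n^{\text{th}}$-product (conformal) structure of $\ov\CF$, and identity (iii) is the indispensable input.
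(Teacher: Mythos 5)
Your proof is correct, and it differs from the paper's mainly in that it actually carries out the verification which the paper disposes of by citation: the paper's entire proof of Lemma \ref{lem:Lie.V} consists of Proposition \ref{prop:v.a} (the vertex algebra structure on $\ov\CF$, quoted from \cite[Proposition 17.1]{KRR}) followed by an appeal to \cite[Theorem 4.6]{kac.book}, i.e.\ the Borcherds construction of the coefficient Lie algebra $\ov\CF \otimes k[\xi^{\pm 1}]/\im(\partial_w \otimes 1 + 1 \otimes \partial_\xi)$ of a vertex algebra. Your closing $\lambda$-bracket paragraph is exactly this route (the coefficient Lie algebra of the Lie conformal algebra underlying $\ov\CF$), so the two arguments rest on the same foundations; what you add is the explicit bookkeeping the paper outsources, and I checked it: descent of the bracket to the quotient via mode sesquilinearity together with $(j+1)\binom{n}{j+1} = n\binom{n-1}{j}$ and $m\binom{m-1}{j} = (m-j)\binom{m}{j}$ is right, and your skew-symmetry computation via quasi-symmetry plus $\sum_{j}(-1)^j\binom{m}{j}\binom{m+n-j}{k-j} = \binom{n}{k}$ is also right (this identity does hold for all integers $m, n$ and $k \geq 0$, e.g.\ by the generating function $(1-\tfrac{x}{1+x})^m(1+x)^{m+n} = (1+x)^n$, which matters since the modes here are arbitrary integers), with all sums finite by locality and Dong's Lemma \ref{lem:Dong}. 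Two further points in your favour: you correctly read the defining relation of Definition \ref{def:mode.lie.algebra} as $(\partial_w a(w)) \otimes \xi^n + n\, a(w) \otimes \xi^{n-1}$ --- as printed the $\partial_w$ is missing, and the literal relation would degenerate the quotient (already $n=0$ forces $a(w) \otimes 1 = 0$) --- and your warning that Jacobi cannot be checked inside $\en(\ov\CF)$ because $a(w)_{[n]} \mapsto a(w)_{(n)}$ may have a kernel is precisely the central-element phenomenon the paper itself raises to motivate replacing $A^c$ by the enveloping algebra $\CU(U, \CF)$. In short: same underlying mechanism, but your version is self-contained where the paper's is a two-line citation, at the cost of the binomial gymnastics; the one step you leave as a sketch (matching the Jacobi identity mode-by-mode against the Borcherds identity) is legitimately covered by your reduction to the known coefficient-Lie-algebra fact, which is the very theorem the paper cites.
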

This lemma follows immediately from the following results.
\begin{prop}[{\cite[Proposition 17.1]{KRR}}]\label{prop:v.a}
Let $(U, \CF)$ be an object of $\CQ$. The vector space $\ov\CF$, equipped with the products $a(w)_{(n)}b(w)$, and $\vac = I_{U}$, is a vertex algebra.
\end{prop}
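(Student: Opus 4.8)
The plan is to verify, for $V = \ov\CF$ with vacuum $\vac = \mathbf{1} = I_U$ and products \eqref{eq:nth.prod.def}, the three axioms of Definition \ref{def:va}. The observation underlying everything is that \emph{any} two elements of $\ov\CF$ form a local pair: the generators in $\CF$ are mutually local by hypothesis, the distribution $\mathbf{1}$ commutes with every continuous distribution and so is local with all of them, and an arbitrary element of $\ov\CF$ is a linear combination of iterated $n$th products of generators. Mutual locality of any two such iterated products then follows by induction on the total number of $n$th-product operations used to form them, the inductive step being exactly Dong's lemma (Lemma \ref{lem:Dong}). Granting this, axiom (1) is immediate, since a local pair satisfies $a(w)_{(n)}b(w) = 0$ for all $n \geq N$ by Definition \ref{def:locality}.

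For axiom (2), and for the identity $Y(\vac, x) = I_{\ov\CF}$, I would compute directly from \eqref{eq:nth.prod.def}. Because $\mathbf{1}$ is the constant distribution $I_U$ and commutes with $a(z)$, the second term of \eqref{eq:nth.prod.def} cancels the first up to the difference of expansions, giving
\[
a(w)_{(n)}\mathbf{1} = \res_z\, a(z)\bigl(i_{z,w}(z-w)^n - i_{w,z}(z-w)^n\bigr).
\]
For $n \geq 0$ the factor $(z-w)^n$ is a polynomial, the two binomial expansions agree, and the right-hand side vanishes; this is axiom (2). For $n = -k-1 < 0$ the same computation recovers $a(w)_{(-k-1)}\mathbf{1} = \partial_w^{(k)}a(w)$, in particular $a(w)_{(-1)}\mathbf{1} = a(w)$. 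A parallel residue computation gives $\mathbf{1}_{(n)}b(w) = \delta_{n,-1}\,b(w)$, which is precisely $Y(\vac, x) = I_{\ov\CF}$.

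The substance of the proposition is axiom (3), the equality $Y(a(w)_{(n)}b(w), x) = Y(a(w), x)_{(n)}Y(b(w), x)$ of $\en(\ov\CF)$-valued fields in $x$, where $Y(a(w), x) = \sum_{m \in \Z} a(w)_{(m)} x^{-m-1}$. I would extract the coefficient of $x^{-m-1}$ on both sides and evaluate on an arbitrary test element $c(w) \in \ov\CF$, thereby avoiding any topological subtleties (the relevant sums being finite by axiom (1)) and reducing the claim to the Borcherds identity among the mode operators: for all $m, n \in \Z$,
\[
\bigl(a(w)_{(n)}b(w)\bigr)_{(m)}c(w) = \sum_{j \geq 0}(-1)^j\binom{n}{j}\Bigl(a(w)_{(n-j)}\bigl(b(w)_{(m+j)}c(w)\bigr) - (-1)^n b(w)_{(m+n-j)}\bigl(a(w)_{(j)}c(w)\bigr)\Bigr).
\]
To establish this I would work in the formal delta-function calculus: mutual locality of $a(w)$ and $b(w)$ produces the operator-product decomposition of $a(z)b(w)$ into a singular part $\sum_{j \geq 0}\bigl(a(w)_{(j)}b(w)\bigr)\,i_{z,w}(z-w)^{-j-1}$ and a normally ordered remainder, equivalently the commutator formula \eqref{eq:comm.fla.vas}. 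Substituting this decomposition into the residue definition \eqref{eq:nth.prod.def} of each side, and using associativity of the product in $U$ to rearrange the resulting triple products $a(z)b(y)c(w)$, identifies the two expressions term by term.

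I expect this last step to be the main obstacle. The difficulty is the bookkeeping forced by the two inequivalent expansions $i_{z,w}$ and $i_{w,z}$ under the iterated residues $\res_z \res_y$: the triple product $a(z)b(y)c(w)$ must be expanded in a prescribed region, and it is locality that licenses passing between the $i_{z,w}$- and $i_{w,z}$-expansions at the cost of the finitely many singular contributions recorded by \eqref{eq:comm.fla.vas}. One must simultaneously keep all infinite sums of coefficients in $U$ convergent, which is guaranteed by the completeness hypothesis of Definition \ref{def:lenient} together with the truncation from axiom (1). Carrying out these manipulations carefully is exactly the content of the computation in \cite[Proposition 17.1]{KRR}.
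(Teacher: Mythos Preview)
Your proposal is correct, but it takes a genuinely different route from the paper's proof. The paper does not verify axiom (3) of Definition \ref{def:va} directly; instead it invokes the reconstruction theorem {\cite[Theorem 4.5]{kac.book}}. Concretely, the paper checks only a short list of hypotheses: that the generating series $a(w)(x) = \sum_{n} a(w)_{(n)} x^{-n-1}$ (for $a(w)$ ranging over the generators in $\CF$) are mutually local quantum fields on $\ov\CF$, that they are translation-covariant for $T = \partial_w$, that $T\vac = 0$, and that $\ov\CF$ is spanned by iterated coefficients of these series applied to $\vac$. The reconstruction theorem then delivers the full vertex algebra structure, including the $n$th-product identity, as a black box.

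Your approach, by contrast, attacks axiom (3) head-on by reducing it to the Borcherds iterate formula and then deriving that formula from the OPE and associativity in $U$. This is more self-contained and tracks the computation actually carried out in \cite{KRR}, at the cost of the delicate residue bookkeeping you correctly flag as the main obstacle. The paper's route is shorter but outsources precisely that computation to Kac's existence theorem; it also requires the additional observation (which your approach does not need) that the coefficients of the generating fields, applied to $\vac$, span $\ov\CF$.
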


We sketch the proof. 
\begin{proof}
For $a(w) \in \ov \CF$ the normally ordered products $a(w)_{(n)}b(w)$ are well-defined and vanish for all $n \geq N$ for some $N$, by Lemma \ref{lem:Dong} and the subsequent remarks.

Thus we have a vector space $\ov \CF$, equipped with $\vac = I_{U}$, and endomorphism $T = \partial_w$. For each $a(w) \in \CF$ the series $a(w)(x) = \sum_{n \in \Z} a(w)_{(n)} x^{-n-1}$ satisfies condition (1) of Definition \ref{def:va}, and a short calculation verifies that these series are mutually local as per Definition \ref{def:locality}. Furthermore $[T, a(x)(x)] = \partial_x a(w)(x)$, $T \vac = 0$, and the vector space $\ov\CF$ is spanned by iterated finite products of coefficients of the $a(w)(x)$ applied to $\vac$.

From the facts listed above it follows (see {\cite[Theorem 4.5]{kac.book}}) that $\CF$ possesses a vertex algebra structure, under which $Y(a(w), x) = a(w)(x)$.
\end{proof}
The equality \eqref{eq:comm.fla.vas} is now a case of the general commutator formula: if $V$ is a vertex algebra and $a, b \in V$ then
\[
[a_{(m)}, b_{(n)}] = \sum_{j \in \Z_+} \binom{m}{j} (a_{(j)}b)_{(m+n-j)}
\]
for all $m, n \in \Z$. Lemma \ref{lem:Lie.V} follows from a general construction in vertex algebra theory {\cite[Theorem 4.6]{kac.book}} (which goes back to \cite{Bor}), applied to the vertex algebra $\ov\CF$.

\begin{defn}\label{def:mode.assoc.algebra}
Let $(U, \CF) \in \CQ$ and let $\g$ be its mode Lie algebra. Denote by $\g_{\geq p}$ the vector subspace of $\g$ spanned by $a(w)_{[n]}$ for $n \geq p$. Consider the limit
\[
\CA^c = \lim U(\g) / U(\g) \g_{\geq p}
\]
as in Example \ref{ex:example1}. Introduce continuous distributions
\[
a(w)(x) = \sum_{n \in \Z} a(w)_{[n]} x^{-n-1}
\]
in $\CA^c$, one for each $a(z) \in \ov\CF$. Let $I \subset \CA^c$ be the two-sided ideal generated by all coefficients of
\begin{align}\label{eq:impose.nth}
\left(a(w)_{(n)}b(w)\right)(x) - \left( a(w)(x) \right)_{(n)}\left( b(w)(x) \right).    
\end{align}
Finally define $\CU(U, \CF) = \CA^c / I$. Then $\CU(U, \CF)$ with the collection of left ideals kernels of the canonical morphisms $\CA^c \rightarrow U(\g) \g_{\geq p}$, is a topological algebra. 
\end{defn}
We remark once more that $\CU(U, \CF)$ is a replacement for the completion of $A \subset \en(\ov\CF)$, described above, with superior formal properties. In fact there is a homomorphism
\[
\CU(U, \CF) \rightarrow \en(\ov\CF)
\]
sending $a(w)_{[n]}$ to $a(w)_{(n)}$.

We are at last ready to define a comonad $(E, \Delta, \varepsilon)$ on $\CQ$, which we will call the quantum field comonad. First we define $E$.
\begin{prop}
Let $(U, \CF) \in \CQ$ and let $\CU = \CU(U, \CF)$. For each $a(w) \in \CF$ the series
\[
a(w)(x) = \sum_{n \in \Z} a(w)_{[n]} x^{-n-1}.
\]
is a continuous distribution on $\CU$, the set of distributions
\[
\widehat\CF = \{a(w)(x) \mid a(w) \in \ov\CF\}
\]
on $\CU$ are mutually local, and the assignment $\E : \CQ \rightarrow \CQ$ defined on objects by $E(U, \CF) = (\CU, \widehat \CF)$ is a functor.
\end{prop}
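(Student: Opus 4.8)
The plan is to treat the three assertions separately, reducing continuity and mutual locality to structure already in place and devoting the real work to functoriality. Recall that the topology on $\CU = \CA^c/I$ is given by the images $\CU_p$ of the kernels of the canonical maps $\CA^c \to U(\g)/U(\g)\g_{\geq p}$. Since $\g_{\geq p}$ is spanned by the modes $a(w)_{[n]}$ with $n \geq p$ and $\g_{\geq p} \subset U(\g)\g_{\geq p}$, every such mode maps to zero in $U(\g)/U(\g)\g_{\geq p}$ and hence its image lies in $\CU_p$. Therefore, for fixed $a(w) \in \ov\CF$ and any $p$, the coefficients $a(w)_{[n]}$ of $a(w)(x)$ lie in $\CU_p$ whenever $n \geq p$; taking $N = p$ shows $a(w)(x)$ is a continuous distribution on $\CU$. (The statement is phrased for $a(w) \in \CF$, but the same argument gives continuity for all of $\ov\CF$, which is what $\widehat\CF$ requires.)

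For mutual locality I would exploit that the bracket defining $\g$ in Definition \ref{def:mode.lie.algebra} is precisely the Borcherds commutator formula; by the defining property of the universal enveloping algebra it persists as an honest commutator in $\CU$, namely
\[
[a(w)_{[m]}, b(w)_{[n]}] = \sum_{j \in \Z_+} \binom{m}{j}\left(a(w)_{(j)}b(w)\right)_{[m+n-j]}.
\]
Packaging these coefficient identities into the formal delta function $\delta(x-y) = \sum_{k \in \Z} x^k y^{-k-1}$ yields the standard expansion
\[
[a(w)(x), b(w)(y)] = \sum_{j \in \Z_+} \frac{1}{j!}\left(a(w)_{(j)}b(w)\right)(y)\,\partial_y^j\,\delta(x-y).
\]
This sum is finite: by Dong's lemma (Lemma \ref{lem:Dong}) together with closure under $n$th products, $\ov\CF$ is a mutually local family, so $a(w)_{(j)}b(w) = 0$ for all $j \geq N$ for some $N$. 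As $(x-y)^N \partial_y^j \delta(x-y) = 0$ for every $j < N$, multiplication by $(x-y)^N$ annihilates the bracket, and $\widehat\CF$ is mutually local.

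It remains to make $\E$ into a functor, which requires defining it on morphisms. Given $f : (U_1, \CF_1) \to (U_2, \CF_2)$, the fact that $f$ is a homomorphism of associative algebras means that applying it coefficientwise commutes with the residues and binomial expansions in \eqref{eq:nth.prod.def}, so $f(a(w)_{(n)}b(w)) = f(a(w))_{(n)} f(b(w))$. Combined with $f(\CF_1) \subset \ov\CF_2$ and the closure of $\ov\CF_2$ under $n$th products, this yields $f(\ov\CF_1) \subset \ov\CF_2$, and since $\partial_w a(w) = a(w)_{(-2)}\mathbf{1}$ it also gives $f \circ \partial_w = \partial_w \circ f$. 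Consequently $f$ induces a map of mode Lie algebras $\g_1 \to \g_2$, $a(w)_{[n]} \mapsto f(a(w))_{[n]}$, which respects the defining relations (using that $f$ commutes with $\partial_w$) and the bracket (using that $f$ preserves $n$th products). This extends to $U(\g_1) \to U(\g_2)$; because a mode of index $n$ is sent to a mode of index $n$ we have $(\g_1)_{\geq p} \mapsto (\g_2)_{\geq p}$ and hence $U(\g_1)\,(\g_1)_{\geq p} \mapsto U(\g_2)\,(\g_2)_{\geq p}$, so the map is continuous for the two filtrations and passes to the completions $\CA_1^c \to \CA_2^c$.

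The main obstacle is to show that this map descends to $\CU_1 \to \CU_2$ and defines a morphism of $\CQ$. For the descent I would check the generators of $I_1$: the coefficients of $(a(w)_{(n)}b(w))(x) - (a(w)(x))_{(n)}(b(w)(x))$ are carried to the coefficients of $(f(a(w))_{(n)}f(b(w)))(x) - (f(a(w))(x))_{(n)}(f(b(w))(x))$, which lie in $I_2$ precisely because the relations \eqref{eq:impose.nth} are imposed for all elements of $\ov\CF_2$ and $f(a(w)), f(b(w)) \in \ov\CF_2$; since the map is an algebra homomorphism, the two-sided ideal $I_1$ therefore lands in $I_2$. The resulting $\E(f) : \CU_1 \to \CU_2$ is continuous by the filtration argument above and sends $a(w)(x) = \sum_n a(w)_{[n]} x^{-n-1}$ to $\sum_n f(a(w))_{[n]} x^{-n-1} = f(a(w))(x) \in \widehat\CF_2 \subset \ov{\widehat\CF_2}$, so it is indeed a morphism $(\CU_1, \widehat\CF_1) \to (\CU_2, \widehat\CF_2)$. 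Finally, preservation of identities and composition is immediate because $\E(f)$ is assembled functorially from $f$ at each stage—mode Lie algebra, enveloping algebra, completion, and quotient—so $\E$ is a functor.
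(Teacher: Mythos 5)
Your proof is correct, and for the two claims the paper actually argues---continuity of $a(w)(x)$ via $a(w)_{[n]} \in \g_{\geq p}$ for $n \geq p$, and mutual locality via the delta-function repackaging of the commutator formula killed by $(x-y)^N$---you follow essentially the same route as the paper. Where you genuinely go beyond it is the functoriality assertion: the paper's proof stops after the locality computation and never defines $\E$ on morphisms, whereas you construct $\E(f)$ in full, checking that $f(\ov\CF_1) \subset \ov\CF_2$, that the induced map of mode Lie algebras respects the defining relations and the bracket, that the filtrations are preserved so the map passes to completions, and---the point most easily overlooked---that the ideal $I_1$ generated by the coefficients of \eqref{eq:impose.nth} lands in $I_2$, so the map descends to $\CU_1 \rightarrow \CU_2$. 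This buys an actual verification of the statement ``$\E$ is a functor,'' which in the paper is left implicit in the phrase ``in a functorial manner'' used later in Proposition \ref{prop:E=E2}. One small gloss worth tightening: you justify $f(a(w)_{(n)}b(w)) = f(a(w))_{(n)}f(b(w))$ by saying $f$ is an algebra homomorphism, but the $n^{\text{th}}$ product \eqref{eq:nth.prod.def} involves infinite sums convergent only in the topology of $U_2$, so a bare homomorphism would not suffice; you need the continuity condition $\phi(U^1_{p_1}) \subset U^2_{p_2}$ built into the definition of morphisms of topological algebras (together with completeness) to commute $f$ past these limits. Since morphisms in $\CQ$ carry this continuity by definition, the step is sound, but the justification should invoke it explicitly.
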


\begin{proof}
We have noted already that $\CU$ is a topological algebra. By definition $a(w)_{[n]} \in \g_{\geq p}$ whenever $n \geq p$, and so the distribution $a(w)(x) = \sum a(w)_{[n]} x^{-n-1}$ on $\CU$ is continuous. We must verify that these distributions are mutually local. Indeed
\begin{align*}
[a(w)(x), b(w)(y)]
&= \sum_{m, n \in \Z} \sum_{j \in \Z_+} \binom{m}{j} (a(w)_{(j)}b(w))_{[m+n-j]} x^{-m-1} y^{-n-1} \\
&= \sum_{j \in \Z_+} \sum_{\ell \in \Z} (a(w)_{(j)}b(w))_{[\ell]} y^{-\ell-1} \sum_{m \in \Z}  \binom{m}{j} x^{-m-1} y^{m-j} \\
&= \sum_{j \in \Z_+} \frac{1}{j!} \sum_{\ell \in \Z} (a(w)_{(j)}b(w))(y) \partial_y^{j} \delta(x, y),
\end{align*}
where $\delta(x, y) = (i_{x, y}-i_{y, x})(x-y)^{-1}$ is the formal Dirac delta function. It follows that
\[
(x-y)^N [a(w)(x), b(w)(y)] = 0
\]
whenever $N$ is sufficiently large that $(z-w)^N [a(z), b(w)] = 0$.
\end{proof}

\begin{prop}\label{prop:E=E2}
There exists a natural isomorphism $\Delta : E \Rightarrow E^2$.
\end{prop}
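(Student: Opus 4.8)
The plan is to exhibit $\Delta_{(U,\CF)}$ as the isomorphism in $\CQ$ induced by a vertex algebra isomorphism $\phi : \ov\CF \to \ov{\widehat\CF}$, $a(w)\mapsto a(w)(x)$, and then to observe that every construction feeding into $E$ depends only on the underlying vertex algebra, so that this isomorphism propagates functorially from $\ov\CF$ all the way to $\CU(U,\CF)$ and to the distributions. The whole statement then reduces to understanding how $\ov\CF$ sits inside $\widehat\CF$.

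First I would analyse $\widehat\CF = \{a(w)(x)\mid a(w)\in\ov\CF\}$ as a space of distributions on $\CU = \CU(U,\CF)$ and show that $\phi$ is an isomorphism of vertex algebras onto $\ov{\widehat\CF}$. It is a homomorphism of $n$-th products precisely because the ideal $I$ of Definition \ref{def:mode.assoc.algebra} forces $(a(w)_{(n)}b(w))(x) = (a(w)(x))_{(n)}(b(w)(x))$; it sends the vacuum $I_U$ to the unit distribution and commutes with $T=\partial_w$. The key structural point is that the closure does not grow: since the $n$-th product of $a(w)(x)$ and $b(w)(x)$ equals $(a(w)_{(n)}b(w))(x)$ with $a(w)_{(n)}b(w)\in\ov\CF$, iterated products of elements of $\widehat\CF$ remain in $\widehat\CF$, so $\ov{\widehat\CF} = \widehat\CF$ and $\phi$ is onto. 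Injectivity I would extract from the homomorphism $\CU\to\en(\ov\CF)$, $a(w)_{[n]}\mapsto a(w)_{(n)}$ recorded after Definition \ref{def:mode.assoc.algebra}: if $a(w)(x)=0$ then every mode $a(w)_{[n]}$ vanishes in $\CU$, hence $a(w)_{(-1)}$ acts as $0$ on $\ov\CF$, and the identity $a(w)_{(-1)}\vac = a(w)$ forces $a(w)=0$. Thus $\phi$ is a vertex algebra isomorphism $\ov\CF\to\ov{\widehat\CF}$.

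Next I would push $\phi$ through the two layers of the $E$ construction. Because $\phi$ preserves $T$, the $n$-th products and the vacuum, the map $\phi\otimes\mathrm{id}$ on $\ov\CF\otimes k[\xi^{\pm 1}]$ respects the defining relations and the bracket of Definition \ref{def:mode.lie.algebra}, and descends to an isomorphism of graded Lie algebras $\bar\phi : \g \to \g'$, where $\g,\g'$ are the mode Lie algebras of $(U,\CF)$ and $E(U,\CF)$; explicitly $\bar\phi(a(w)_{[n]}) = (a(w)(x))_{[n]}$, so $\bar\phi(\g_{\geq p}) = \g'_{\geq p}$. This lifts to $U(\g)\cong U(\g')$ compatibly with the filtrations $U(\g)\g_{\geq p}$ and with the ideals imposing \eqref{eq:impose.nth}, hence to an isomorphism of topological algebras $\Phi : \CU(U,\CF)\to\CU(E(U,\CF))$. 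Applying $\Phi$ coefficientwise sends $a(w)(x)=\sum_n a(w)_{[n]}x^{-n-1}$ to $\sum_n (a(w)(x))_{[n]}x^{-n-1} = (a(w)(x))(x)$, so it carries $\widehat\CF$ bijectively onto $\widehat{\widehat\CF}$. Therefore $\Phi$ is an isomorphism $E(U,\CF)\to E^2(U,\CF)$ in $\CQ$; this is $\Delta_{(U,\CF)}$, matching the informal formula $a(w)\mapsto\sum_n a(w)_{(n)}(-)x^{-n-1}$ of the introduction.

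Finally I would check naturality: for a morphism $(U_1,\CF_1)\to(U_2,\CF_2)$ the relevant square commutes because every ingredient ($\phi$, the induced Lie algebra map, the enveloping algebra map, and the passage to distributions) is defined canonically from the vertex algebra data and is therefore compatible with $E(f)$ and $E^2(f)$. I expect the main obstacle to be the first paragraph, specifically the twin facts that $\ov{\widehat\CF}=\widehat\CF$ and that $\phi$ is injective: both hinge on reading off the correct consequences of the ideal $I$ and of the identity $a(w)_{(-1)}\vac = a(w)$, and once they are secured the remaining steps are formal consequences of the functoriality of the mode Lie algebra and enveloping algebra constructions.
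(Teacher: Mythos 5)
Your proposal is correct and follows essentially the same route as the paper: reduce everything, via functoriality of the mode Lie algebra and enveloping algebra constructions, to showing that $a(w) \mapsto a(w)(x)$ is a vertex algebra isomorphism $\ov\CF \to \ov\CF_1$, with the homomorphism property coming from the relation \eqref{eq:impose.nth} and surjectivity from the definition of the closure. The only cosmetic difference is your injectivity argument via the representation $\CU(U,\CF) \to \en(\ov\CF)$ and the identity $a(w)_{(-1)}\vac = a(w)$, where the paper instead invokes $\varepsilon \circ \alpha = \Id$ through the state-field correspondence --- but both rest on the same underlying fact.
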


\begin{proof}
Let $(U, \CF)$ be an object of $\CQ$, and write $(U_1, \CF_1) = E(U, \CF)$ and $(U_2, \CF_2) = E^2(U, \CF)$. The algebra $U_1$ is constructed from the vertex algebra $\ov\CF$, and similarly $U_2$ from $\ov\CF_1$, in a functorial manner. It thus suffices to establish a natural isomorphism $\alpha : \ov\CF \rightarrow \ov\CF_1$ of vertex algebras.

The morphism we seek is, of course, $\alpha : a(w) \mapsto a(w)(x)$. It is a morphism of vertex algebras because $\alpha(\vac) = \vac$, and
\[
\alpha(a(w)_{(n)}b(w)) = \alpha(a(w))_{(n)}\alpha(b(w))
\]
for all $a(w), b(w) \in \ov\CF$, which in turn follows directly from the defining relation \eqref{eq:impose.nth} in $\CU$. Since $a(w)$ can be recovered from $\varepsilon(a(w)(x)) = a(w)(x)\vac|_{x=0}$, and $\varepsilon \circ \alpha = \Id$, we see that $\alpha$ is injective. On the other hand $\alpha$ is surjective by definition of $\ov\CF_1$.
\end{proof}
As suggested by the proof of Proposition \ref{prop:E=E2}, we should like to obtain the counit of our comonad as something like the state-field correspondence $\varepsilon$ of Proposition \ref{prop:state-field}. A little additional work is required to define it, however, as the topological algebra $U$ is not itself a vertex algebra.

\begin{prop}[{\cite[Theorem 3.2.10]{Li.96}}]\label{prop:mod.prop}
Let $(U, \CF) \in \CQ$ and let $M$ be a smooth $U$-module. Then $M$ acquires the structure of a module over the vertex algebra $\ov\CF$, via
\[
Y^M(a(w), x) = a(x)|_M.
\]
\end{prop}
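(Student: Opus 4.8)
The plan is to exploit the module structure directly as a homomorphism $\rho : U \rightarrow \en(M)$ of associative algebras, and to push the vertex-algebra structure on $\ov\CF$ through it. First I would extend $\rho$ coefficientwise to distributions, setting $a(x)|_M = \sum_{n \in \Z} \rho(a_{(n)}) x^{-n-1}$ for $a(w) = \sum_n a_{(n)} w^{-n-1}$ in $\ov\CF$, and check that this is a quantum field on $M$. This is exactly where smoothness is used: given $m \in M$, choose $p$ with $U_p m = 0$; continuity of $a(w)$ provides $N$ with $a_{(n)} \in U_p$ for $n \geq N$, so $\rho(a_{(n)})m = 0$ for $n \geq N$ and hence $a(x)|_M\, m \in M((x))$. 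I would also note that the fields $\{a(x)|_M\}$ are pairwise mutually local on $M$, since the relation $(z-w)^N[a(z), b(w)] = 0$ valid in $U$ is preserved by the homomorphism $\rho$.

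Next I would establish the $n^{\text{th}}$-product compatibility
\[
Y^M(a(w)_{(n)}b(w), x) = Y^M(a(w), x)_{(n)} Y^M(b(w), x),
\]
in which $a(w)_{(n)}b(w)$ on the left is the product in the vertex algebra $\ov\CF$ of Proposition \ref{prop:v.a} (that is, the $n^{\text{th}}$ product of distributions \eqref{eq:nth.prod.def}) and the right-hand side is the $n^{\text{th}}$ product of fields on $M$. Both are built from the same residue and binomial-expansion operations, the sole difference being that multiplication is carried out in $U$ in one case and in $\en(M)$ in the other; since $\rho$ is multiplicative, applying it coefficientwise to \eqref{eq:nth.prod.def} should carry the one product to the other. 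The vacuum is handled at once: $\vac = I_U$ is the constant distribution, so $Y^M(\vac, x) = \rho(I_U) = \Id_M$.

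The hard part will be the interchange of $\rho$ with the infinite summations hidden in \eqref{eq:nth.prod.def}: the coefficients of $a(w)_{(n)}b(w)$ are sums in $U$ that converge only by the completeness axiom of Definition \ref{def:lenient}, and I must verify that $\rho$ transports these completed sums to the corresponding convergent expressions in $\en(M)$. I expect smoothness to resolve this: evaluated on a fixed $m \in M$, all but finitely many terms of each such sum annihilate $m$, so the sum collapses to a finite one and $\rho(\sum_i u_i)\,m = \sum_i \rho(u_i)\,m$ holds unambiguously. Once this bookkeeping is in place, the truncation property, the vacuum normalisation, mutual locality, and the $n^{\text{th}}$-product identity are exactly the hypotheses of the module analogue of the existence theorem {\cite[Theorem 4.5]{kac.book}} invoked in the proof of Proposition \ref{prop:v.a}, which I would cite to conclude that $M$ is an $\ov\CF$-module with $Y^M(a(w), x) = a(x)|_M$.
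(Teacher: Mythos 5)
The paper gives no proof of this proposition beyond the citation of \cite[Theorem 3.2.10]{Li.96}, and your argument is a sound reconstruction along exactly the lines of that cited result: smoothness yields the quantum field property of $a(x)|_M$, locality in $U$ pushes forward through the action map, and the completeness bookkeeping you flag is resolved precisely as you say --- convergence of the coefficient sums of \eqref{eq:nth.prod.def} means every tail lies in some $U_p$, and $U_p m = 0$ collapses each sum to a finite one on a fixed vector (here it matters that the $U_p$ are \emph{left} ideals, so the tail terms remain in $U_p$ after left multiplication). The one imprecision is your closing citation: \cite[Theorem 4.5]{kac.book} is the existence theorem for vertex algebras, and no module analogue is stated there. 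This costs you little: relative to the paper's Definition~\ref{def:va}, the operative notion of $\ov\CF$-module is the parallel one (quantum fields, $Y^M(\vac,x) = \Id_M$, and $Y^M(a(w)_{(n)}b(w), x) = Y^M(a(w),x)_{(n)}Y^M(b(w),x)$), so the verifications you have already carried out \emph{are} the module structure and the final appeal is superfluous. If instead one insists on the Jacobi-identity definition of module, then passing from $n^{\text{th}}$-product compatibility to the Jacobi identity for the action is the genuinely nontrivial content, and the correct reference at that point is \cite[Theorem 3.2.10]{Li.96} itself (or the treatment in \cite{FBZ}), not \cite{kac.book}.
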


\begin{prop}\label{prop:phi.homo}
There exists a morphism of topological algebras $\varphi : \CU(U, \CF) \rightarrow U$ such that
\[
\varphi(a(w)_{[n]}) = a_{(n)}.
\]
This induces a natural transformation $\varepsilon : \E \Rightarrow \Id$.
\end{prop}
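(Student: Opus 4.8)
The plan is to build $\varphi$ in stages, beginning with the Lie algebra $\g$ and working up to $\CU(U,\CF) = \CA^c/I$. First I would define a linear map $\bar\varphi : \g \to U$ by $a(w)_{[n]} \mapsto a_{(n)}$, where $a_{(n)} \in U$ is the genuine $n$th coefficient of the continuous distribution $a(w) \in \ov\CF$. To see this is well defined on the quotient defining $\g$, I would check that it annihilates the defining relators: their images have the form $(\partial_w a(w))_{(n)} + n\, a_{(n-1)}$, which vanish because the coefficients of $\partial_w a(w)$ are $(\partial_w a(w))_{(n)} = -n\, a_{(n-1)}$. I would then verify that $\bar\varphi$ is a Lie algebra homomorphism, $U$ being regarded under its commutator bracket; this is precisely the assertion that the actual coefficients of mutually local distributions obey the commutator formula
\[
[a_{(m)}, b_{(n)}] = \sum_{j \in \Z_+} \binom{m}{j} (a(w)_{(j)}b(w))_{(m+n-j)},
\]
which follows from the definition \eqref{eq:nth.prod.def} of the $n$th product together with the decomposition of a local bracket $[a(z),b(w)]$ into derivatives of the formal delta function (as in the proof preceding Proposition \ref{prop:E=E2}). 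By the universal property of the universal enveloping algebra this extends uniquely to an associative algebra homomorphism $\Phi : U(\g) \to U$.

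The heart of the argument is to promote $\Phi$ to a continuous homomorphism on $\CA^c = \varprojlim U(\g)/U(\g)\g_{\geq p}$, i.e.\ to verify the morphism condition of Definition \ref{def:lenient}. Since $U$ is complete, it suffices to produce, for each $q$, an index $p$ with $\Phi(U(\g)\g_{\geq p}) \subset U_q$. Because each $U_q$ is a left ideal, $\Phi(U(\g)\g_{\geq p})$ is controlled by the coefficients $a_{(n)}$ with $a(w) \in \ov\CF$ and $n \geq p$, so the task reduces to showing these coefficients are absorbed into $U_q$ uniformly enough over $\ov\CF$. I expect this to be the main obstacle: a single continuous distribution has its high coefficients in $U_q$ by definition, but one must control this simultaneously across the whole closure $\ov\CF$, whose elements are unboundedly iterated $n$th products whose coefficients can sit at arbitrarily high mode indices. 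The estimate here is where the topological-algebra axioms (in particular condition (2), governing right multiplication) and the relations $I$ must be used together, and it is the step that genuinely requires the enveloping-algebra formalism of \cite{FBZ} rather than the naive completion of $A \subset \en(\ov\CF)$ discussed earlier.

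Granting continuity, I would check that the extended map $\Phi : \CA^c \to U$ annihilates $I$ and so descends to $\varphi : \CU(U,\CF) \to U$. The cleanest route is to observe that $\Phi$ sends the distribution $a(w)(x) = \sum_n a(w)_{[n]} x^{-n-1}$ to $\sum_n a_{(n)} x^{-n-1}$, namely back to $a(w)$ itself; being a continuous algebra homomorphism, $\Phi$ then commutes with the formation of $n$th products, so it sends each generator $(a(w)_{(n)}b(w))(x) - (a(w)(x))_{(n)}(b(w)(x))$ of $I$ to $a(w)_{(n)}b(w) - a(w)_{(n)}b(w) = 0$. The identity $\varphi(a(w)_{[n]}) = a_{(n)}$ is then immediate, and $\varphi(\widehat\CF) \subset \ov\CF$ because $\varphi(a(w)(x)) = a(w)$, so $\varphi$ is a morphism $(\CU, \widehat\CF) \to (U,\CF)$ in $\CQ$, providing the component $\varepsilon_{(U,\CF)}$.

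Finally I would establish naturality of $\varepsilon$. Given a $\CQ$-morphism $f : (U_1, \CF_1) \to (U_2, \CF_2)$, both composites $f \circ \varphi_1$ and $\varphi_2 \circ \E(f)$ send a generator $a(w)_{[n]}$ (for $a(w) \in \ov{\CF_1}$) to $f(a_{(n)})$, using that $\E(f)$ is induced by the vertex-algebra homomorphism $\ov{\CF_1} \to \ov{\CF_2}$ underlying $f$. Since these generators topologically generate $\CU(U_1,\CF_1)$ and both composites are continuous, the naturality square commutes. This final step is routine once functoriality of $\E$ and the continuity of the preceding paragraphs are in hand.
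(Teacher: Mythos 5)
Your construction diverges from the paper's proof at exactly the point you flag as ``the main obstacle'', and that obstacle is a genuine gap: the uniform estimate you propose to reduce continuity to is in fact false in general. You ask, given $q$, for an index $p$ with $\Phi(U(\g)\g_{\geq p}) \subset U_q$, and you correctly observe this amounts to absorbing the coefficients $a_{(n)}$, $n \geq p$, into $U_q$ \emph{uniformly} over all $a(w) \in \ov\CF$. But continuity of a distribution only provides a threshold $N = N(a(w), q)$ depending on the individual distribution, and these thresholds drift unboundedly across $\ov\CF$: since $(\partial_w^{(k)} a(w))_{(m)} = \pm\binom{m}{k} a_{(m-k)}$, if some low coefficient $a_{(0)} \notin U_q$ then $(\partial_w^{(k)}a(w))_{(k)} = \pm a_{(0)} \notin U_q$ with mode index $k$ arbitrarily large (already in Example \ref{ex:example1} for a suitable graded Lie algebra), so no single $p$ can serve. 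Hence the reduction you set up cannot be closed as stated, and since your descent through $I$ (``$\Phi$, being continuous, commutes with formation of $n$th products'') leans on this unestablished continuity, the second half of your argument is unsupported. Your first stage is fine: well-definedness of $\bar\varphi$ on $\g$ via $(\partial_w a(w))_{(n)} = -n\,a_{(n-1)}$ (reading the relator with the intended $\partial_w$), and the Lie homomorphism property via the delta-function decomposition of a local bracket.

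The paper's proof avoids the uniformity problem entirely by never estimating inside $U$ or on the completion directly: it tests everything against vectors of smooth modules. For each $p$ the quotient $U/U_p$ is a smooth left $U$-module (this is condition (2) of Definition \ref{def:lenient}), so by Proposition \ref{prop:mod.prop} it becomes a module over the vertex algebra $\ov\CF$ via $Y^M(a(w), x) = a(x)|_M$; the module identity $Y^M(a(w)_{(n)}b(w), x) = Y^M(a(w),x)_{(n)} Y^M(b(w),x)$, evaluated on the image of $1$, yields the coefficient relations of \eqref{eq:impose.nth} in $U$ modulo $U_p$, with all infinite sums truncating \emph{per vector} by smoothness --- which is precisely the pointwise substitute for the uniform bound you were seeking. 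Since $\cap_p U_p = 0$ and $U$ is complete, the relations then hold in $U$ itself; this is what kills $I$ and legitimises $\varphi(a(w)_{[n]}) = a_{(n)}$, and the natural transformation follows, much as in your final paragraph, from $\varphi(\widehat\CF) \subset \ov\CF$. If you want to salvage your bottom-up route through $\g$, $U(\g)$ and $\CA^c$, the repair is the same idea: verify convergence and the relations after applying to each smooth quotient $U/U_p$, rather than seeking an absolute absorption of $\Phi(U(\g)\g_{\geq p})$ into $U_q$.
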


\begin{proof}
For each value of $p \in \Z_+$, the quotient $U / U_p$ is a smooth left $U$-module. Thus $U / U_p$ acquires the structure of a module over the vertex algebra $\ov \CF$, by Proposition \ref{prop:mod.prop}. In particular the relations $Y(a_{(n)}b, w) = Y(a, w)_{(n)}Y(b, w)$ hold in $U$ modulo $U_p$. Since by definition $\cap_p U_p = 0$ and $U$ is complete, the relations hold in $U$. 

Write $(\CU, \widehat\CF)$ for $E(U, \CF)$. The morphism $\varphi : \CU \rightarrow U$ just constructed, induces an inclusion $\varphi(\widehat\CF) \subset \ov\CF$. Thus the second claim of the Proposition statement follows.
\end{proof}

\begin{lemma}
The triple $(\E, \Delta, \varepsilon)$ is an idempotent comonad on $\CQ$.
\end{lemma}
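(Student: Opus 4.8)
The plan is to exploit the fact that an idempotent comonad is precisely a comonad whose comultiplication is a natural isomorphism. Proposition~\ref{prop:E=E2} already provides a natural isomorphism $\Delta : \E \Rightarrow \E^2$ and Proposition~\ref{prop:phi.homo} provides the counit $\varepsilon : \E \Rightarrow \Id$, so it remains only to verify the two counit triangles and the coassociativity square; idempotency is then automatic. My strategy is to carry out each verification at the level of the mode generators $a(w)_{[n]}$, where all three structure maps admit transparent descriptions. Fixing $X = (U, \CF)$ and writing $\E(X) = (U_1, \CF_1)$, $\E^2(X) = (U_2, \CF_2)$, I would first record the three formulas that drive everything: $\Delta_X$ sends $a(w)_{[n]} \in U_1$ to $(a(w)(x))_{[n]} \in U_2$ (it is induced by $\alpha : \ov\CF \to \ov{\CF_1}$, $a(w) \mapsto a(w)(x)$); the counit $\varepsilon_{\E(X)} = \varphi_{(U_1, \CF_1)}$ sends a generator $c_{[n]}$ of $U_2$, with $c \in \ov{\CF_1}$, to the $n$-th coefficient $c_{(n)}$ of the distribution $c$ on $U_1$; and the functor $\E$ sends a $\CQ$-morphism with underlying vertex algebra map $\bar f$ to the algebra map $c_{[n]} \mapsto \bar f(c)_{[n]}$.

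For the counit axioms I would compute both components explicitly. Applying $\varepsilon_{\E(X)}$ to $(a(w)(x))_{[n]} = c_{[n]}$ with $c = a(w)(x)$ returns $c_{(n)} = a(w)_{[n]}$, so $\varepsilon_{\E(X)} \circ \Delta_X$ sends $a(w)_{[n]} \mapsto (a(w)(x))_{[n]} \mapsto a(w)_{[n]}$, the identity; this is the upper triangle. For the lower triangle I would identify the underlying vertex algebra map of $\varepsilon_X : \E(X) \to X$ as $a(w)(x) \mapsto a(w)$, i.e. the inverse of $\alpha$, which is exactly the recovery $\varepsilon(a(w)(x)) = a(w)(x)\vac|_{x=0}$ noted in the proof of Proposition~\ref{prop:E=E2}. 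Functoriality of $\E$ then gives that $\E(\varepsilon_X)$ again sends $(a(w)(x))_{[n]} \mapsto a(w)_{[n]}$, so that $\E(\varepsilon_X) = \varepsilon_{\E(X)}$ as morphisms $\E^2(X) \to \E(X)$. This equality is the categorical signature of idempotency, and it reduces the lower triangle to the upper one.

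For coassociativity I would argue similarly that $\E(\Delta_X)$ and $\Delta_{\E(X)}$ coincide as morphisms $\E^2(X) \to \E^3(X)$. The underlying vertex algebra map of $\Delta_X$, read off from the mode formula above, is the ``adjoin a variable'' isomorphism $\alpha_{\E(X)} : \ov{\CF_1} \to \ov{\CF_2}$, $c \mapsto c(y)$; hence both $\E(\Delta_X)$ (by functoriality) and $\Delta_{\E(X)}$ (by Proposition~\ref{prop:E=E2} applied to the object $\E(X)$) send the generator $c_{[n]}$ to $\alpha_{\E(X)}(c)_{[n]}$. They are therefore equal, and coassociativity $\E(\Delta_X) \circ \Delta_X = \Delta_{\E(X)} \circ \Delta_X$ follows trivially.

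The step I expect to require the most care is purely bookkeeping: correctly distinguishing the two a priori different natural transformations $\E(\Delta_X)$ and $\Delta_{\E(X)}$ (respectively $\E(\varepsilon_X)$ and $\varepsilon_{\E(X)}$), which live between the same functors and are built by iterating the single operation of adjoining a formal variable. The crux is to pin down the underlying vertex algebra map of each component and to confirm that $\E$ acts on a morphism by the formula $c_{[n]} \mapsto \bar f(c)_{[n]}$; once this is in place every axiom collapses to the statement that adjoining variables and then removing them again is consistent, and no analytic input beyond Propositions~\ref{prop:E=E2} and~\ref{prop:phi.homo} is needed.
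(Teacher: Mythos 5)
Your proposal is correct and takes essentially the same route as the paper: the two counit triangles are verified by the same mode-generator computations (the first via the recovery $\varepsilon(a(w)(x)) = a(w)(x)\vac|_{x=0}$ already noted in the proof of Proposition \ref{prop:E=E2}, the second via $\E(\varepsilon_X)$ acting by $a(w)(x)_{[n]} \mapsto a(x)_{[n]}$), and your generator-level identity $\E(\Delta_X) = \Delta_{\E(X)}$ is precisely the content the paper packages abstractly as naturality of the state-field transformation $Y : \Id \Rightarrow F$ on $\vertex$ together with functoriality of the construction of $\E(X)$ in $\ov\CF$. The only difference is presentational: where the paper invokes that functoriality wholesale, you unwind it to the explicit formula $c_{[n]} \mapsto \bar f(c)_{[n]}$, which is a sound (and arguably more transparent) rendering of the same argument.
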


\begin{proof}
The equality $\varepsilon_{\E(X)} \circ \Delta_X = \Id$ has already been established in the course of the proof of Proposition \ref{prop:E=E2}.

As for the equality $\E(\varepsilon_{X}) \circ \Delta_X = \Id$, since $\varepsilon_X$ acts as $a(w)_{[n]} \mapsto a_{(n)}$ and thus $a(w)(x) \mapsto a(x)$. The corresponding morphism $\E(\varepsilon_X)$ acts as $a(w)(x)_{[n]} \mapsto a(x)_{[n]}$, and satisfies the required property. 

Now we turn to the coassociativity axiom. Let $\vertex$ denote the category of vertex algebras, and let $F : \vertex \rightarrow \vertex$ be the functor sending $V$ to
\[
\{Y(a, w) \mid a \in V\}.
\]
By Proposition \ref{prop:state-field} this defines an auto-equivalence of $\vertex$. We denote by $Y : \Id \Rightarrow F$ the natural transformation defined by $Y_V(a) = Y(a, w)$.

For $X \in \CQ$ the construction of $\E(X)$ is functorial in the associated vertex algebra $\ov\CF$, and upon comparison of the definitions of the natural transformations $\Delta$ and $Y$, it is clear that coassociativity follows from commutativity of the following diagram:
\[
\xymatrix{
F^2(V) & F(V) \ar@{->}[l]_{Y_{F(V)}} \\
F(V) \ar@{->}[u]^{F(Y_V)} & V \ar@{->}[l]^{Y_V} \ar@{->}[u]_{Y_V} \\
}
\]
But this is part of the statement that $F$ is a functor, so we are done.
\end{proof}

Let $\E$ be an arbitrary comonad on a category $\CC$ and let $X \in \CC$. Then $\E(X)$ together with the component $\Delta_X : \E(X) \rightarrow \E^2(X)$ is a coalgebra over $\E$. In general $\E$-coalgebras of this form are called free $\E$-coalgebras. Now let $X$ be a coalgebra over $\E$ with structure morphism $\theta : X \rightarrow \E(X)$. It is well known that if $\E$ is idempotent then $\theta$ must be an isomorphism. (See \cite[Proposition 4.2.3]{Borceaux}, see also \cite{star.functors}.) In other words coalgebras over an idempotent comonad $\E$ are all free. From this, together with Proposition \ref{prop:v.a}, the following result now follows.
\begin{prop}
A coalgebra over the comonad $(\E, \Delta, \varepsilon)$ is a vertex algebra. More precisely if $(U, \CF) \in \CQ$ is such a coalgebra, then the vector space $\CF$ equipped with vacuum vector $\vac = \mathbf{1}$, translation operator $T = \partial_w$, and $n^{\text{th}}$-products defined by \eqref{eq:nth.prod.def}, is a vertex algebra.
\end{prop}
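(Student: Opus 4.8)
The plan is to deduce the statement formally from two inputs already in hand: the fact, just established, that $(\E,\Delta,\varepsilon)$ is an \emph{idempotent} comonad---so that, as noted, every $\E$-coalgebra is free---together with Proposition \ref{prop:v.a}, which equips the closure $\ov\CF$ of any object of $\CQ$ with a vertex algebra structure. In outline: a coalgebra $(U,\CF)$ is isomorphic, via its structure map, to the free coalgebra $\E(U,\CF)$, whose space of distributions is precisely the vertex algebra produced by Proposition \ref{prop:v.a}; transporting that structure back along the isomorphism exhibits $\CF$ as a vertex algebra. Most of the genuine mathematical content is thus already contained in Proposition \ref{prop:v.a}, and the work here is to make the transport precise.

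Concretely, I would begin with a coalgebra $(U,\CF)$ with structure map $\theta:(U,\CF)\to\E(U,\CF)$. Since $\E$ is idempotent, $\theta$ is an isomorphism in $\CQ$ with inverse the counit $\varepsilon_{(U,\CF)}$; in particular the underlying morphism $\varphi:\CU(U,\CF)\to U$ of Proposition \ref{prop:phi.homo}, $a(w)_{[n]}\mapsto a_{(n)}$, is an isomorphism of topological algebras. Writing $\E(U,\CF)=(\CU(U,\CF),\widehat\CF)$, the map $\alpha$ of Proposition \ref{prop:E=E2} identifies $\widehat\CF$ with the vertex algebra $\ov\CF$ of Proposition \ref{prop:v.a}, whose vacuum is $\mathbf 1$, whose translation operator is $\partial_w$ (recall $a(w)_{(-2)}\mathbf 1=\partial_w a(w)$), and whose products are \eqref{eq:nth.prod.def}. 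Since $\varphi$ is an isomorphism of topological algebras, it intertwines coefficientwise the $n^{\text{th}}$ products \eqref{eq:nth.prod.def} over $\CU(U,\CF)$ with those over $U$, fixes the unit distribution $\mathbf 1$, and commutes with the formal derivative; so transporting along $\theta$ moves this entire structure onto $\CF$, provided $\CF$ is large enough to receive it.

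The step I expect to require the most care is exactly this proviso: the passage from Proposition \ref{prop:v.a}, which concerns $\ov\CF$, to the assertion of the statement, which concerns $\CF$, demands that for a coalgebra the two coincide, i.e.\ that $\CF$ already contains the unit $\mathbf 1$ and is closed under the products \eqref{eq:nth.prod.def}, so that $\CF=\ov\CF$. The mechanism I would use is that the distribution space $\widehat\CF$ of the free coalgebra $\E(U,\CF)$ is \emph{by construction closed} under $n^{\text{th}}$ products---this is precisely the surjectivity of $\alpha$ in Proposition \ref{prop:E=E2}, which in turn is forced by the defining relations \eqref{eq:impose.nth} of $\CU(U,\CF)$---and then to transport this closedness across the structure isomorphism $\theta$. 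Verifying that $\theta$ (equivalently $\varepsilon_{(U,\CF)}$) really does carry the closed space $\widehat\CF$ onto all of $\CF$, and not merely onto a subspace, is the crux, since an isomorphism of the underlying algebras alone does not distinguish $\CF$ from $\ov\CF$; the full strength of the isomorphism in $\CQ$, rather than of $\varphi$, must be invoked. Once the identification $\CF=\ov\CF$ is secured, the vacuum and every iterated product genuinely lie in $\CF$, and the vertex algebra axioms are inherited from Proposition \ref{prop:v.a} with no further computation.
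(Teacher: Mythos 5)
Your overall route is exactly the paper's: the paper derives this proposition in a single line from the two facts you cite, namely that coalgebras over an idempotent comonad are free (so the structure map $\theta$ is an isomorphism, with inverse the counit) together with Proposition \ref{prop:v.a}, which equips $\ov\CF$ with a vertex algebra structure. Your first two paragraphs are a faithful elaboration of that. The difficulty is the step you yourself single out as the crux: showing that for a coalgebra one has $\CF = \ov\CF$. That step would fail, and no strengthening of the transport argument can rescue it, because morphisms of $\CQ$ see the distinguished space of distributions only through its closure: a morphism $(U_1,\CF_1) \rightarrow (U_2,\CF_2)$ is required only to satisfy $f(\CF_1) \subset \ov\CF_2$, and equality of morphisms in $\CQ$ is equality of the underlying algebra homomorphisms. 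Consequently $\Id_U$ is an isomorphism $(U,\CF) \cong (U,\ov\CF)$ in $\CQ$ whenever $\CF$ is any subspace with closure $\ov\CF$; moreover the entire comonad $(\E,\Delta,\varepsilon)$ is built from $\ov\CF$ alone, so $\E(U,\CF) = \E(U,\ov\CF)$ and any coalgebra structure on one of these objects is literally a coalgebra structure on the other. Taking $\CF$ to be a proper subspace with the same closure and with $\mathbf{1} \notin \CF$ thus produces a coalgebra for which the literal claim of the proposition (that $\CF$ carries a vertex algebra structure with $\vac = \mathbf{1}$) is false.

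So the ``full strength of the isomorphism in $\CQ$'' that you hope to invoke is precisely what is unavailable: a $\CQ$-isomorphism does not carry $\widehat\CF$ onto $\CF$, only into $\ov\CF$. The honest conclusion of your transport argument is that $\ov\CF$ is a vertex algebra --- which is Proposition \ref{prop:v.a} applied directly, requiring no coalgebra structure at all --- and the proposition's statement must be read with $\CF$ replaced by $\ov\CF$, equivalently with the implicit convention that the distribution space of a coalgebra is closed under the products \eqref{eq:nth.prod.def}. (For the free coalgebras $\E(X)$ this does hold on the nose: $\widehat\CF$ is closed under $n^{\text{th}}$ products by the defining relations \eqref{eq:impose.nth}, which is the surjectivity of $\alpha$ in Proposition \ref{prop:E=E2} that you correctly identify.) Your instinct that the passage from $\ov\CF$ to $\CF$ is the delicate point was sound --- the paper itself silently elides it --- but the mechanism you propose cannot close the gap, because the gap reflects a genuine looseness in the statement rather than a missing lemma.
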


\section{Adjoint functors and the quantum field comonad}

Let $\E : \CC \rightarrow \CC$ be a comonad. The Eilenberg-Moore category $\CC^\E$ is defined to be the category of $\E$-coalgebras, i.e., the category whose objects are objects of $\CC$ equipped with an $\E$-coalgebra structure, and whose morphisms are morphisms of $\CC$ compatible with $\E$-coalgebra structures. The Kleisli category $\CC_\E$ is defined to be the full subcategory of $\CC^\E$ consisting of free $\E$-coalgebras.

Let us denote by $\vertex$ the category of vertex algebras. From the results of the previous section, $\CQ^\E$ is equivalent to $\vertex$, and the comonad $\E$ is recovered from a pair of adjoint functors as follows. Define the functor $C : \CQ \rightarrow \vertex$ taking $(U, \CF) \in \CQ$ to the vertex algebra $\ov\CF$. Define the functor $U : \vertex \rightarrow \CQ$ taking a vertex algebra $V$ to the pair $(U(V), \CF_V)$, where $U(V)$ is the associative enveloping algebra of $V$ constructed as in Definitions \ref{def:mode.lie.algebra} and \ref{def:mode.assoc.algebra} above, and $\CF_V$ is the set of formal distributions
\[
\{ a(w) = \sum_{n \in \Z} a_{[n]} w^{-n-1} \mid a \in V \}
\]
on $U(V)$. Then $\E = U C$.

In fact it is possible to factor $C$ through a sort of dual Rees construction in the following manner. Let $\prevertex$ denote the category of pairs $(W, \CF)$ consisting of a vector space $W$ and a set $\CF$ of mutually local quantum fields on $W$. We refer to such an object as a \emph{prevertex algebra} and denote the category of prevertex algebras as $\prevertex$. We introduce a functor
\[
R : \CQ \rightarrow \prevertex
\]
defined on objects as follows. Let $(U, \CF) \in \CQ$, then $R(U, \CF) = (W, \CF_1)$ where
\[
W = \bigoplus_{p \in \Z_+} U / U_p
\]
and $\CF_1$ is the set of fields on $W$ induced from elements of $\CF$ in the obvious way. Since $\CF$ consists of continuous distributions, $\CF_1$ consists of quantum fields. Now there is a functor $G : \prevertex \rightarrow \vertex$, taking $(W, \CF)$ to the closure $\ov\CF$, defined in essentially the same way as in the proof of Proposition \ref{prop:v.a}. As in that case, the vector space $\ov\CF$ acquires a natural structure of vertex algebra.

We thus have three categories and three functors, as follows:
\[
\xymatrix{
& \vertex \ar@{->}[dl]_{U} & \\
\CQ \ar@{->}[rr]^{R} & & \prevertex \ar@{->}[ul]_{G} \\
}
\]
The functor $C$ is recovered as $C = GR$, and the comonad $\E$ as $\E = UGR$. We remark that $GRU$ is naturally equivalent to the identity functor on $\vertex$, a fact which implies idempotency of $\E$ (and seems to be a little stronger).

The relationship between the categories $\prevertex$ and $\vertex$ is intuitively similar to the relationship between $\CQ$ and $\vertex$, without the additional complications of topological algebras and completions. However the functor $G$ is not adjoint to $RU$ as there is no natural morphism relating $RUG(X)$ and $X$ for all $X \in \prevertex$, although one of the two rigidity axioms, namely
\begin{align}\label{eq:FRUadj.1}
 \xymatrix{
 U \ar@{=>}[d]^{\Id} \ar@{=>}[r] & U \circ (FR \circ U) \ar@{->}[d]^{=} \\
 U & (U \circ FR) \circ U \ar@{=>}[l] \\
 }    
\end{align}
is satisfied, being a formal consequence of the fact that $(U, GR)$ is an adjoint pair. Nor is there a relation of adjointness between $G$ and the obvious forgetful functor $\vertex \rightarrow \prevertex$, for similar reasons. Thus, although $\prevertex$ is a natural starting point, it seems necessary to deviate from it to the category $\CQ$ in order to get a good adjoint pair and our quantum field comonad $\E$.


\end{document}